\numberwithin{equation}{section}
\numberwithin{equation}{section}
\DeclareMathOperator{\lcm}{lcm}
\newtheorem{thm}{Theorem}[section]
\newtheorem{theorem}{Theorem}[section]
\newtheorem{lem}{Lemma}[section]
\newcommand{\N}{\mathbb{N}}
\newcommand{\C}{\mathbb{C}}
\newlength{\bibitemsep}\setlength{\bibitemsep}{.75\baselineskip plus .05\baselineskip minus .05\baselineskip}
\newlength{\bibparskip}\setlength{\bibparskip}{0pt}
\let\oldthebibliography\thebibliography
\renewcommand\thebibliography[1]{%
	\oldthebibliography{#1}%
	\setlength{\parskip}{\bibitemsep}%
	\setlength{\itemsep}{\bibparskip}%
}
\begin{document}

\title{Autocorrelation of the Mobius Function}


\author{N. A. Carella}
\address{}
\curraddr{}
\email{}
\thanks{}


\subjclass[2010]{Primary 11N37; Secondary 11P32.}

\keywords{Autocorrelation, Correlation, Arithmetic function, Mobius function, Liouville function, Chowla conjecture.}

\date{}

\dedicatory{}

\begin{abstract} Let $x\geq 1$ be a large integer, and let $\mu:\mathbb{N}\longrightarrow\{-1,0,1\}$ be the Mobius function. This article proposes an effective asymptotic result for the autocorrelation function $\sum_{n \leq x} \mu(n) \mu(n+t) =O\left( e^{-c\sqrt{\log x}}\right) $, where $t\ne 0$ be a small fixed integer, and $c>0$ is a constant. 
\end{abstract}

\maketitle
\tableofcontents
\pagenumbering{Page gobble}
\pagenumbering{arabic}

\section{Introduction} \label{S5757}
The correlation function induced by a pair of arithmetic functions $f,g:\mathbb{N}\longrightarrow\mathbb{C}$ is defined by the arithmetic average
\begin{equation} \label{eq5757.200}
	\sum_{n \leq x}f(n) g(n).
\end{equation}	

The Chowla conjecture, Elliot conjecture, and Sarnak conjecture are correlation problems associated with the Mobius function $g(n)=\mu(n)$ and bounded multiplicative functions $f(n) \ll1$.  These problems, which are the topics of current research, have many similarities and common structures. The corresponding arithmetic averages are, respectively,
\begin{equation} \label{eq5757.200A}
	\sum_{n \leq x} \mu(n) \mu(n+t)=o(x), 
\end{equation}	
where $t\ne 0$, see \cite{CS1965}, \cite{RO2018};
\begin{equation} \label{eq5757.200B}
	\sum_{n \leq x} f(n) f(n+t)=o(x), 
\end{equation}	
where $f:\N\longrightarrow \C$ is a bounded arithmetic function, \cite{EP1994}, \cite{KT2023}, and 
\begin{equation} \label{eq5757.200C}
	\sum_{n \leq x} \mu(n) f(T^n(z)=o(x), 
\end{equation}	
where $T:Z\longrightarrow Z$ is a Borel measurable transformation, see \cite{SP2010}. Each of these conjectures has an extensive literature. \\

The best estimates for \eqref{eq5757.200A} in the literature have the asymptotic formulae of the forms
\begin{equation} \label{eq5757.210}
	\sum_{n \leq x} \mu(n) \mu(n+t)=O \left (\frac{x}{\sqrt{ \log \log x }} \right ),
\end{equation} 
where $t \ne0$ is a fixed integer, see \cite[Corollary 5]{HH2022}. Here, the following result is proposed. 

\begin{theorem} \label{thm5757MN.121}  \hypertarget{thm5757MN.121}Let $x\geq1$ be a large prime number, and let $\mu:\mathbb{N} \longrightarrow \{-1,0,1\}$ be the Mobius function. If $t\ne0$ is an integer, then, 
	\begin{equation} \label{eqP1199.800}
		\sum_{n \leq x} \mu(n) \mu(n+t) =O\bigg(e^{-c\sqrt{\log x}}\bigg)\nonumber, 
	\end{equation}	\nonumber
	where $c>0$ is a constant.
\end{theorem}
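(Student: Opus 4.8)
The shape of the claimed error term, $e^{-c\sqrt{\log x}}$, is exactly the saving produced by the classical de la Vall\'ee Poussin zero-free region for $\zeta(s)$, so the plan is to try to transfer that saving from a single M\"obius sum to the shifted correlation. First I would insert a combinatorial identity for one of the two factors --- Heath--Brown's identity, or Vaughan's identity --- to write $\sum_{n\le x}\mu(n)\mu(n+t)$ as a bounded linear combination of \emph{Type I} sums, in which one summation variable carries a smooth coefficient, and \emph{Type II} (bilinear) sums of the shape
\[
\sum_{m\sim M}\sum_{k\sim K} a_m\,b_k\,\mu(mk+t).
\]
The smoothing/Perron step and the dyadic splitting into such ranges are routine; the content lies in estimating the two families of sums.

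For a Type I term, fixing the smooth variable $d$ (so that $n=dm$) turns the inner sum into a M\"obius sum over an arithmetic progression, $\sum_{m\le x/d}\mu(dm+t)$, i.e.\ $\mu$ summed over $n'\equiv t \pmod{d}$. For $d$ up to any fixed power of $\log x$, the Prime Number Theorem for $\mu$ in arithmetic progressions with the de la Vall\'ee Poussin error term (Siegel--Walfisz) gives a saving of size $e^{-c\sqrt{\log x}}$, uniformly in the residue, and summing over such $d$ preserves it. For the complementary range of larger moduli I would control the total by Cauchy--Schwarz together with a large-sieve / Barban--Davenport--Halberstam input. This is the step at which the characteristic factor $e^{-c\sqrt{\log x}}$ should enter.

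The main obstacle is the Type II bilinear sum, and the obstruction is structural rather than technical. Because $\mu$ is multiplicative while the shift couples $n$ and $n+t$, whose factorizations are unrelated, there is no Euler product or Dirichlet series in a single complex variable whose zeros govern $\sum_{n\le x}\mu(n)\mu(n+t)$; hence the saving from the zero-free region cannot simply be imported here. I would attempt to extract cancellation by the dispersion method or the circle method --- expanding $\mu(mk+t)$ into additive characters and separating major from minor arcs --- but the minor arcs demand genuine bilinear cancellation in $\sum_{m,k} a_m\,b_k\, e(\alpha mk)$ under an additive twist that the standard estimates do not supply. This is precisely the point at which the Chowla conjecture remains open, so I expect it to be the crux; moreover, since $e^{-c\sqrt{\log x}}\to 0$, the stated bound asserts essentially complete cancellation rather than the weaker $o(x)$, and I would scrutinise this step most carefully before accepting that so strong a conclusion can be reached by these means.
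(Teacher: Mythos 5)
Your skepticism is the right conclusion, and your diagnosis of \emph{why} the statement is out of reach is sound; what you could not know is that the paper does not attempt the Type I/Type II route at all. Instead it expands each M\"obius factor through the Ramanujan-sum identity $\mu(n)=c_n(1)=\sum_{1\le u<n,\ \gcd(u,n)=1}e^{2\pi i u/n}$, detects the coprimality conditions with $\sum_{d\mid\gcd(u,n)}\mu(d)$, and after a change of variables reduces the correlation to counting integers $n\le x$ with $d_1\mid n$ and $d_2\mid n+1$, i.e.\ a single residue class modulo $q=d_1d_2$. This produces a main term $R_0(x)=x\sum_{\gcd(d_1,d_2)=1}\mu(d_1)\mu(d_2)/(d_1d_2)$ over the box $d_1<x$, $d_2<x+1$, plus an error term $R_1(x)$ built from the discrepancies of these residue-class counts.

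The decisive gap is in the paper, not in your reasoning. The error analysis rests on the paper's Lemma 3.1, which asserts that $\#\{n\le x: n\equiv a \bmod q\}=x/q+O\left((x/q)e^{-c\sqrt{\log x}}\right)$ uniformly for $q\le x$. This is false: the true error is $O(1)$ and cannot be improved, whereas for $q$ near $x$ the claimed bound is smaller than $1$, so the lemma asserts an essentially exact count (take $q$ slightly above $x/2$, where the discrepancy is of size $1$). Its proof is also logically invalid: it assumes an error of the form $O(x^{\alpha})$, treats that \emph{upper} bound as a \emph{lower} bound to manufacture a contradiction with the large sieve, and then concludes that the error "must" have the desired shape --- ruling out three candidate forms does not establish a fourth, and big-$O$ hypotheses cannot be negated this way. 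Worse, the lemma is then applied with modulus $q=d_1d_2$, which ranges up to $x^2$, far outside its own hypothesis $q\le x$; in that range the honest $O(1)$ errors, summed over the $\asymp x^2$ pairs $(d_1,d_2)$, give only the useless bound $R_1(x)=O(x^2)$. The main term is equally unjustified: the bound for $R_0$ is quoted from Theorem 2.1, which concerns the single sum $\sum_{n\le x}\mu(n)/n$, not a coprimality-constrained double sum.

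Finally, you were right to scrutinize the strength of the claimed bound rather than only the method. The statement is not merely unproven but false: the sum $\sum_{n\le x}\mu(n)\mu(n+t)$ is an integer, so a bound $O(e^{-c\sqrt{\log x}})=o(1)$ would force it to vanish identically for all large $x$; but the set of $n$ with $n$ and $n+t$ both squarefree has positive density, so the partial sums change by $\pm1$ at a positive proportion of integers and cannot be eventually zero. (Note also that the paper's own computation ends with $O(xe^{-c\sqrt{\log x}})$, silently dropping a factor of $x$ relative to the stated theorem.) Your structural point stands: nothing in this argument circumvents the bilinear obstruction you identified; the paper's attempt founders exactly where one should expect, in the error term of the divisor-counting step.
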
	

The proof of \hyperlink{thm5757MN.121}{Theorem} \ref{thm5757MN.121}, based on a new idea and standard results in analytic number theory, is given in \hyperlink{SP1199}{Section} \ref{SP1199}.

\section{Average Orders of Mobius Functions}\label{S2222P}
\begin{theorem} \label{thm2222.500} \hypertarget{thm2222.500} If $\mu: \N\longrightarrow \{-1,0,1\}$ is the Mobius function, then, for any large number $x>1$, the following statements are true.
	\begin{enumerate} [font=\normalfont, label=(\roman*)]
		\item $\displaystyle \sum_{n \leq x} \mu(n)=O \left (xe^{-c\sqrt{\log x}}\right )$, \tabto{8cm} unconditionally,
		\item $\displaystyle \sum_{n\leq x}\frac{\mu(n)}{n}=O\left( e^{-c\sqrt{\log x}} \right ), $ \tabto{8cm} unconditionally,
	\end{enumerate}where $c>0$ is an absolute constant.
\end{theorem}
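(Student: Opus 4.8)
The plan is to establish (i) by analytic (contour-integration) methods and then to deduce (ii) from (i) by partial summation. For (i), I would start from the Dirichlet series identity $\sum_{n\geq 1}\mu(n)n^{-s}=1/\zeta(s)$, valid for $\Re(s)>1$, and apply a truncated Perron formula to write
\begin{equation}
\sum_{n\leq x}\mu(n)=\frac{1}{2\pi i}\int_{\kappa-iT}^{\kappa+iT}\frac{x^{s}}{s\,\zeta(s)}\,ds+E(x,T),
\end{equation}
with $\kappa=1+1/\log x$ and a truncation error controlled by the standard estimate $E(x,T)\ll x(\log x)/T$ together with the contribution of the integers near $x$. The whole point is to move the line of integration to the left and exploit the cancellation stored in $1/\zeta(s)$.

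Next I would shift the contour to the line $\Re(s)=1-c_{0}/\log(|t|+2)$ furnished by the classical de la Vall\'ee Poussin zero-free region. Since $\zeta(s)$ has no zeros in this region, and its only singularity (the simple pole at $s=1$) corresponds to a zero rather than a pole of $1/\zeta(s)$, the integrand $x^{s}/(s\,\zeta(s))$ is holomorphic there; hence no residue is collected and the would-be main term vanishes, which is precisely the mechanism producing cancellation in $\sum_{n\leq x}\mu(n)$. The remaining work is to bound the integral over the shifted vertical segment and the two horizontal segments using the standard upper bound $1/\zeta(s)\ll\log(|t|+2)$ valid on and slightly inside the zero-free region. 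Balancing these estimates against the truncation error and taking $\log T\asymp\sqrt{\log x}$ yields the saving $\sum_{n\leq x}\mu(n)\ll x\exp(-c\sqrt{\log x})$, which is (i).

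For (ii), I would pass from $M(x)=\sum_{n\leq x}\mu(n)$ to $\sum_{n\leq x}\mu(n)/n$ by Abel summation, but carefully: a direct application on $[1,x]$ only yields $O(1)$, because $\int_{1}^{x}e^{-c\sqrt{\log t}}\,dt/t$ converges. The correct route is to invoke the classical value $\sum_{n\geq 1}\mu(n)/n=0$ and work with the tail,
\begin{equation}
\sum_{n\leq x}\frac{\mu(n)}{n}=-\sum_{n>x}\frac{\mu(n)}{n}=\frac{M(x)}{x}-\int_{x}^{\infty}\frac{M(t)}{t^{2}}\,dt.
\end{equation}
Inserting (i) gives $M(x)/x\ll e^{-c\sqrt{\log x}}$, while the substitution $u=\log t$ yields $\int_{x}^{\infty}|M(t)|t^{-2}\,dt\ll\int_{\log x}^{\infty}e^{-c\sqrt{u}}\,du\ll\sqrt{\log x}\,e^{-c\sqrt{\log x}}$. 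Absorbing the factor $\sqrt{\log x}$ into the exponential by replacing $c$ with a slightly smaller constant $c'>0$ delivers (ii).

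The main obstacle is the analytic input for (i): securing a quantitatively explicit zero-free region together with the matching upper bound on $1/\zeta(s)$, and then performing the parameter optimization that converts these into the sharp $\exp(-c\sqrt{\log x})$ saving while keeping every error term (truncation and horizontal segments) under control. By contrast, the deduction of (ii) is routine once one remembers to work with the tail and the convergence value $\sum_{n\geq 1}\mu(n)/n=0$ rather than applying Abel summation naively on $[1,x]$.
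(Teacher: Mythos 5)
Your proof is correct: the truncated Perron formula combined with the de la Vall\'ee Poussin zero-free region for (i), and the tail Abel summation using $\sum_{n\geq 1}\mu(n)/n=0$ for (ii) (including your observation that naive partial summation on $[1,x]$ only yields $O(1)$), is exactly the classical argument. The paper itself offers no proof of this theorem---it simply cites Montgomery--Vaughan and Hardy--Wright---and your argument is precisely the standard one contained in those references, so the two approaches coincide.
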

\begin{proof}[\textbf{Proof}]  See, \cite[p.\ 182]{MV2007}, \cite[p.\ 347]{HW2008}, et alii.   
\end{proof}

There are many sharp bounds of the summatory function of the Mobius function, say, $O(xe^{-c(\log x)^{\delta}})$, and the conditional estimate $O(x^{1/2+\varepsilon})$ presupposes that the nontrivial zeros of the zeta function $ \zeta(\rho)=0$ in the critical strip $\{0<\Re e(s)<1 \}$ are of the form $\rho=1/2+it, t \in \mathbb{R}$. However, the simpler notation will be used whenever it is convenient.

\section{Integers in Arithmetic Progressions}\label{A2002}
An effective asymptotic formula for the number of integers in arithmetic progressions is derived in \hyperlink{lemA2002.400W}{Lemma} \ref{lemA2002.400W}. The derivation is based on a version of the basic large sieve inequality stated below.
\begin{thm}\label{thmA2002.200W} \hypertarget{thmA2002.200W} Let $x$ be a large number and let $Q
	\leq x$. If $\{a_n:n\geq1\}$ is a sequence of real number, then
	\begin{equation}\label{eqA2002.100W}
		\sum_{q\leq Q}	q\sum_{1\leq a\leq q}\bigg |\sum_{\substack{n \leq x\\ n\equiv a \bmod q}}a_n-\frac{1}{q}\sum_{n \leq x}a_n\bigg|^2\leq Q\left(10Q+2\pi x \right) \sum_{n \leq x}|a_n|^2\nonumber.
	\end{equation}
\end{thm}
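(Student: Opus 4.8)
The plan is to transfer the variance over arithmetic progressions into a sum of squares of exponential sums, and then to apply the analytic large sieve inequality for well-spaced points one modulus at a time. Throughout I write $e(\theta)=e^{2\pi i\theta}$, set $A=\sum_{n\le x}a_n$, let $S(\alpha)=\sum_{n\le x}a_n e(n\alpha)$ be the associated exponential sum, and abbreviate the left-hand deviation by $V(a,q)=\sum_{n\le x,\,n\equiv a\bmod q}a_n-\tfrac1q\sum_{n\le x}a_n$.

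First I would expand the indicator of the residue class via additive characters, using $\mathbf{1}[n\equiv a\bmod q]=\tfrac1q\sum_{b=0}^{q-1}e\big(b(n-a)/q\big)$. Separating the frequency $b=0$, whose contribution is exactly $A/q$, cancels the mean term and leaves only the nonzero frequencies:
\begin{equation}
V(a,q)=\frac1q\sum_{b=1}^{q-1}e(-ab/q)\,S(b/q)\nonumber.
\end{equation}
Squaring and summing over the complete residue system $1\le a\le q$, the orthogonality relation $\sum_{a=1}^{q}e\big(-a(b_1-b_2)/q\big)=q\,\mathbf{1}[b_1=b_2]$, valid since $1\le b_1,b_2\le q-1$ forces $b_1\equiv b_2$ to mean $b_1=b_2$, collapses the double frequency sum and yields the clean identity $q\sum_{1\le a\le q}|V(a,q)|^2=\sum_{b=1}^{q-1}|S(b/q)|^2$. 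This already accounts for the weight $q$ appearing on the left-hand side, and the modulus $q=1$ contributes an empty sum.

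Next I would apply the large sieve separately for each modulus. For fixed $q$ the frequencies $\{b/q:1\le b\le q-1\}$ are $1/q$-spaced on $\R/\Z$, so the analytic large sieve inequality in the explicit form $\sum_r|S(\alpha_r)|^2\le(2\pi x+\delta^{-1})\sum_{n\le x}|a_n|^2$ with $\delta=1/q$ gives $\sum_{b=1}^{q-1}|S(b/q)|^2\le(2\pi x+q)\sum_{n\le x}|a_n|^2$. Summing over $q\le Q$ and using $\sum_{q\le Q}q\le 10Q^2$ then produces
\begin{equation}
\sum_{q\le Q}q\sum_{1\le a\le q}|V(a,q)|^2\le\Big(2\pi x\,Q+10Q^2\Big)\sum_{n\le x}|a_n|^2=Q\big(10Q+2\pi x\big)\sum_{n\le x}|a_n|^2\nonumber,
\end{equation}
which is precisely the claimed inequality.

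The substantive content, and the only genuine obstacle, is the analytic large sieve inequality invoked per modulus; everything else is Fourier duality and orthogonality of additive characters. I expect the delicate point to be selecting the explicit-constant version of the large sieve that reproduces the stated constants: the $2\pi x$ is exactly the interval-length contribution for the support $[1,x]$, while the crude estimate $\sum_{q\le Q}q\le 10Q^2$ absorbs the spacing term $\delta^{-1}=q$. Applying instead the sharp Montgomery--Vaughan constant $x-1+q$ per modulus would sharpen the bound to $Q(Q+x)\sum_{n\le x}|a_n|^2$, so the stated form is deliberately lossy but entirely adequate for the application.
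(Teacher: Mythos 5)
Your proof is correct, and there is nothing in the paper to diverge from: the paper's ``proof'' of this theorem is only a citation to Davenport's Chapter 23 and Gallagher's 1967 paper, so your argument supplies exactly the details that citation points to. Your chain --- expanding the residue-class indicator in additive characters, using orthogonality to get the identity $q\sum_{1\le a\le q}|V(a,q)|^2=\sum_{b=1}^{q-1}|S(b/q)|^2$, then applying the analytic large sieve in Gallagher's form $\delta^{-1}+2\pi x$ per modulus with $\delta=1/q$ --- is the standard route in those references, and the constant $2\pi x$ in the statement is precisely the fingerprint of Gallagher's bound, as you noted; the crude step $\sum_{q\le Q}q\le 10Q^2$ (in fact $\le Q^2$) is harmless and reproduces the stated $Q\left(10Q+2\pi x\right)$.
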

\begin{proof}[\textbf{Proof}] The essential technical details are covered in \cite[Chapter 23]{DH2000}. This inequality is discussed in \cite{GP1967} and the literature in the theory of the large sieve. 
\end{proof}
\begin{lem} \label{lemA2002.400W} \hypertarget{lemA2002.400W} If $x \geq 1$ is a large number and $1\leq a< q \leq x$, then
	\begin{equation}\label{eqA2002.400W}
		\max_{1\leq a\leq q}\bigg |\sum_{\substack{n \leq x\\ n\equiv a \bmod q}}1-\frac{1}{q}\sum_{n \leq x}1\bigg|=O\left(\frac{x}{q}e^{-c\sqrt{\log x} }\right),
	\end{equation}
	where $ c>0$ is a constant. In particular,
	\begin{equation}\label{eqA2002.405W}
		\sum_{\substack{n \leq x\\ n\equiv a \bmod q}}1=\left[\frac{x}{q}\right]+O\left(\frac{x}{q}e^{-c\sqrt{\log x} }\right).
	\end{equation}
	
\end{lem}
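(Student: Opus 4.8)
The plan is to reduce the estimate to an elementary lattice-point count and to treat the exponential factor as an absorbed error. First I would fix a modulus $q$ and a residue class $a$ with $1\le a\le q$, and set $N(x;q,a)=\#\{n\le x: n\equiv a \bmod q\}$. The admissible integers are $a, a+q, a+2q,\dots$, so $N(x;q,a)=\lfloor (x-a)/q\rfloor+1 = x/q+O(1)$ with an absolute implied constant, while $\frac1q\sum_{n\le x}1=\frac{\lfloor x\rfloor}{q}=\frac{x}{q}+O(1/q)$. Subtracting, the main terms $x/q$ cancel and the discrepancy is $O(1)$, uniformly in $a$; taking the maximum over $1\le a\le q$ preserves this. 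This already establishes \eqref{eqA2002.400W} and \eqref{eqA2002.405W} in the sharper form with error $O(1)$, and the stated bound follows in any range where $1\ll \frac{x}{q}e^{-c\sqrt{\log x}}$.

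I would deliberately avoid routing the single-modulus estimate through the large sieve of Theorem \ref{thmA2002.200W}, even though it is placed immediately beforehand. Taking $a_n=1$ gives $\sum_{n\le x}|a_n|^2=\lfloor x\rfloor$, and retaining only the single term $q\,|N(x;q,a)-\lfloor x\rfloor/q|^2$ on the left of that inequality yields merely $|N(x;q,a)-\lfloor x\rfloor/q|=O(x)$, which is weaker than trivial. The large sieve governs the mean square of the discrepancies over all $q\le Q$ at once, not a fixed modulus, so it is the wrong instrument here; the floor count is both sharper and cleaner.

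The point needing the most care, and the genuine obstacle, is the factor $e^{-c\sqrt{\log x}}$ itself. For counting plain integers the true error is bounded, not exponentially small relative to $x/q$; such a saving is a feature of counts of \emph{primes} in progressions (via Siegel--Walfisz) or of Mobius sums, not of integer counts. Consequently the inequality as written overstates what the method delivers once $q$ approaches $x$: it is valid throughout $q\le x\,e^{-c\sqrt{\log x}}$, which covers the moduli arising in the proof of Theorem \ref{thm5757MN.121}, but for $q$ near $x$ the honest statement is the uniform $O(1)$ bound. I would therefore record the lemma with error $O(1)$ and keep the exponential form only as the weaker consequence valid in that range.
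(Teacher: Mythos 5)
Your elementary count is correct, but it is genuinely not the paper's argument. The paper proves the lemma by contradiction through the large sieve (Theorem \ref{thmA2002.200W}): it writes the count as $x/q+E(x)$, assumes in turn $E(x)=O(x^{\alpha})$, $O(x/(\log x)^{c})$, $O(xe^{-c\sqrt{\log x}})$, inserts each assumption into the left-hand side of the sieve inequality to produce a lower bound (for instance $\gg x^{3+2\alpha}$ in \eqref{eqA2002.430W}) that contradicts the sieve upper bound $\ll x^{3}$ of \eqref{eqA2002.440W}, and then concludes in \eqref{eqA2002.450W} that the error must be $O\left((x/q)e^{-c\sqrt{\log x}}\right)$. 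Your instinct to avoid this route is sound, because the route is not valid: a hypothesis $E(x)=O(x^{\alpha})$ is an upper bound and cannot be converted into the lower bound $\left|x^{\alpha}+\{x\}/q\right|^{2}\gg x^{2\alpha}$ used in \eqref{eqA2002.430W}; with the true, sign-oscillating error term of size $O(1)$, the left-hand side of the sieve inequality really is $\ll x^{3}$ and no contradiction arises; and in any case eliminating three candidate error shapes cannot establish a fourth. Your floor-function computation, by contrast, is the correct and standard treatment, and it yields the sharper uniform statement with error $O(1)$.

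Your critique of the statement itself is also the mathematically decisive point: for counts of plain integers the worst-case discrepancy is genuinely of order $1$, so \eqref{eqA2002.400W} is false when $q$ is close to $x$, where the left side can be $\asymp 1$ while the claimed bound is $e^{-c\sqrt{\log x}}=o(1)$; hence no proof of the lemma as literally stated can be correct, and your reformulation (error $O(1)$, with the exponential form only a consequence in the range $q\le xe^{-c\sqrt{\log x}}$) is the honest one. One correction to your final remark, however: that restricted range does \emph{not} cover the moduli arising in the application. Lemma \ref{lemP1199.400} invokes the present lemma with modulus $q=d_1d_2$, where $d_1<x$ and $d_2<x+1$, so $q$ ranges up to roughly $x^{2}$, far beyond $xe^{-c\sqrt{\log x}}$; the deficiency you identified therefore propagates into the proof of Theorem \ref{thm5757MN.121} itself rather than being harmless for it.
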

\begin{proof}[\textbf{Proof}] Trivially, the basic finite sum satisfies the asymptotic \begin{equation}\label{eqA2002.410W}
		\sum_{n \leq x}1=[x]= x-\{x\},
	\end{equation}
	where $[x]=x-\{x\}$ is the largest integer function, and the number of integers in any equivalent class satisfies the asymptotic formula
	\begin{equation}\label{eqA2002.415W}
		\sum_{\substack{n \leq x\\ n\equiv a \bmod q}}1	=	\frac{x}{q}+E(x).
	\end{equation}
	Let $Q=x$ and let the sequence of real numbers be $a_n=1$ for  $n\geq1$. Now suppose that the error term is of the form 
	\begin{equation}\label{eqA2002.420W}
		E(x)=E_0(x)=O\left(x^{\alpha}\right),
	\end{equation}
	where $ \alpha\in(0,1]$ is a constant. Then, the large sieve inequality,  \hyperlink{thmA2002.200W}{Theorem} \ref{thmA2002.200W}, yields the lower bound
	\begin{eqnarray}\label{eqA2002.430W}
		\sum_{q\leq x}	q\sum_{1\leq a\leq q}\bigg |\sum_{\substack{n \leq x\\ n\equiv a \bmod q}}1-\frac{1}{q}\sum_{n \leq x}1\bigg|^2
		&=&\sum_{q\leq x}	q\sum_{1\leq a\leq q}\bigg |\frac{x}{q}+O\left(x^{\alpha}\right)-\frac{x-\{x\}}{q}\bigg|^2\nonumber\\[.2cm]
		&\gg&\sum_{q\leq x}	q\sum_{1\leq a\leq q}\bigg |x^{\alpha}+\frac{\{x\}}{q}\bigg|^2\nonumber\\[.2cm]
		&\gg&\sum_{q\leq x}	q\sum_{1\leq a\leq q}\left |x^{\alpha}\right|^2\nonumber\\[.2cm]
		&\gg&x^{2\alpha}\sum_{q\leq x}q\sum_{1\leq a\leq q}1\nonumber\\[.2cm]
		&\gg&x^{2\alpha}\sum_{q\leq x}q^2\nonumber\\[.2cm]
		&\gg&	x^{3+2\alpha} .
	\end{eqnarray}
	On the other direction, it yields the upper bound
	\begin{eqnarray}\label{eqA2002.440W}
		\sum_{q\leq x}	q\sum_{1\leq a\leq q}\bigg |\sum_{\substack{n \leq x\\ n\equiv a \bmod q}}1-\frac{1}{q}\sum_{n \leq x}1\bigg|^2
		&\leq&	 Q\left(10Q+2\pi x \right) \sum_{n \leq x}|a_n|^2\\
		&\leq&	 x\left(10x+2\pi x \right) \sum_{n \leq x}|1|^2\nonumber\\
		&\ll&	 x^3\nonumber.
	\end{eqnarray}
	Clearly, the lower bound in \eqref{eqA2002.430W} contradicts the upper bound in \eqref{eqA2002.440W}. Similarly, the other possibilities for the error term
	\begin{equation}\label{eqA2002.445W}
		E_1=O\left(\frac{x}{(\log x)^c} \right)\quad \text{ and }\quad
		E_2=O\left(xe^{-c\sqrt{\log x} }\right),
	\end{equation}
	contradict large sieve inequality. Therefore, the error term is of the form
	\begin{equation}\label{eqA2002.450W}
		E(x)=O\left(\frac{x}{q}e^{-c\sqrt{\log x} }\right)=O\left(\frac{x}{q(\log x)^c} \right)=O\left(\frac{x}{q }\right),
	\end{equation}
	where $ c>0$ is a constant.
\end{proof}

\section{Proof of Theorem 1.1 }  \label{SP1199}\hypertarget{SP1199}
The proof explored in this section is based on a new technique from harmonic analysis. This technique uses the Ramanujan sum $c_q(n)$ and elementary analytic methods to derive an asymptotic formula for the Mobius autocorrelation function 
\begin{equation}\label{eqP1199.800a}
	R(t)=\sum_{n\leq x} \mu(n) \mu(n+t) . 
\end{equation}

\begin{proof}[\textbf{Proof}] ({\bfseries  \hyperlink{thm5757MN.121} {Theorem} \ref{thm5757MN.121}}) Without loss in generality, assume that $x\geq1$ is a large integer, and let $t=1$. Replace this identity, see \cite[Section 8.3]{AT1976},
		\begin{equation}\label{eqP1199.810a}
			c_n(1)	=\sum_{\substack{1\leq u<n\\\gcd(n,u)=1}}e^{i2 \pi u/n}
			=\mu(n)
		\end{equation}
		in \eqref{eqP1199.800a} twice, and substitute the characteristic function
		\begin{equation}\label{eqP1199.810b}
			\sum_{\substack{d\mid a\\d\mid n}}\mu(d)=
			\begin{cases}
				1&	\text{ if }\gcd(a,n)=1,\\
				0&	\text{ if }\gcd(a,n)\ne1,
			\end{cases}
		\end{equation}	
of relatively prime numbers. These substitutions transform \eqref{eqP1199.800a} into an exponential autocorrelation function:
		\begin{eqnarray}\label{eqP1199.810c}
			\sum_{n\leq x}\mu(n)\mu(n+1)&=& 	\sum_{n\leq x,}\sum_{\substack{1\leq u<n\\\gcd(n,u)=1}}e^{i2 \pi u/n}\times\sum_{\substack{1\leq v<n+1\\\gcd(n+1,v)=1}}e^{i2 \pi v/n+1}\\[.3cm]
			&=& 	\sum_{n\leq x,}\sum_{1\leq u<n}e^{i2 \pi u/n}\sum_{\substack{d_1\mid u\\d_1\mid n}}\mu(d_1)\times\sum_{1\leq v<n+1}e^{i2 \pi v/n+1}\sum_{\substack{d_2\mid v\\d_2\mid n+1}}\mu(d_2)\nonumber.
		\end{eqnarray}
Next, switch the order of summations
		\begin{align}\label{eqP1199.810d}
R(1)&=			\sum_{n\leq x}\mu(n)\mu(n+1)\\[.3cm]
&=	\sum_{n\leq x,}\sum_{d_1\mid n}\mu(d_1)\sum_{\substack{1\leq u<n\\d_1\mid u}}e^{i2 \pi u/n}\times \sum_{d_2\mid n+1}\mu(d_2)\sum_{\substack{1\leq v<n+1\\d_2\mid v}}e^{i2 \pi v/n+1}\nonumber\\[.3cm]
			&=\sum_{\substack{1\leq d_1< x\\1\leq d_2< x+1}}\mu(d_1)\mu(d_2)\sum_{\substack{1\leq n\leq x\\d_1\mid n,\; d_2\mid n+1}}	\sum_{\substack{1\leq u<n\\d_1\mid u}}e^{i2 \pi u/n}\sum_{\substack{1\leq v<n+1\\d_2\mid v}}e^{i2 \pi v/n+1}	\nonumber.
		\end{align}
Proceed to substitute the change of variables
		\begin{align}\label{eqP1199.810e}
			u&=d_1r , &n&=d_1k;\\[.2cm]
			v&=d_2s ,&n+1&=d_2m;
		\end{align}
to simplify \eqref{eqP1199.810d}. Specifically, 
		\begin{eqnarray}\label{eqP1199.820a}
			\sum_{n\leq x}\mu(n)\mu(n+1)
			&=&  \sum_{\substack{1\leq d_1< x\\1\leq d_2< x+1}}\mu(d_1)\mu(d_2)\sum_{\substack{1\leq n\leq x\\d_1\mid n,\; d_2\mid n+1}}	\sum_{1\leq r<k}e^{i2 \pi r/k}\sum_{1\leq s<m}e^{i2 \pi s/m}	\nonumber\\[.3cm]
			&=& \sum_{\substack{1\leq d_1< x\\1\leq d_2< x+1}}\mu(d_1)\mu(d_2)\sum_{\substack{1\leq n\leq x\\d_1\mid n,\; d_2\mid n+1}}1.
		\end{eqnarray}
		
The last equality follows from
		\begin{equation}\label{eqP1199.820b}
			\sum_{1\leq r<k}e^{i2 \pi r/k}
			=\sum_{1\leq s<m}e^{i2 \pi s/m}
			=-1
		\end{equation}
for any integers $k,m\geq2$. \\
		
Now, the conditions $d_1\mid n$ and $d_2\mid n+1$ imply that $\lcm(d_1,d_2)=d_1d_2$. Rearrange the last finite sum in the equivalent form
		\begin{eqnarray}\label{eqP1199.820c}
R(1)&=&			\sum_{n\leq x}\mu(n)\mu(n+1)\\[.3cm]
			&=&  \sum_{\substack{1\leq d_1< x\\1\leq d_2< x+1}}\mu(d_1)\mu(d_2)\sum_{\substack{1\leq n\leq x\\d_1\mid n,\; d_2\mid n+1}}1	\nonumber\\[.3cm]
			&=&  \sum_{\substack{1\leq d_1< x\\1\leq d_2< x+1}}\mu(d_1)\mu(d_2)\left(\sum_{\substack{1\leq n\leq x\\d_1\mid n,\; d_2\mid n+1}}1-\frac{x}{d_1d_2} +\frac{x}{d_1d_2} \right) 	\nonumber\\[.3cm]
			&=&  x\sum_{\substack{1\leq d_1< x\\1\leq d_2< x+1\\\gcd(d_1, d_2)=1}}\frac{\mu(d_1)\mu(d_2)}{d_1d_2} +\sum_{\substack{1\leq d_1< x\\1\leq d_2< x+1\\\gcd(d_1, d_2)=1}}\mu(d_1)\mu(d_2)\left(\sum_{\substack{1\leq n\leq x\\d_1\mid n,\; d_2\mid n+1}}1-\frac{x}{d_1d_2}  \right)  	\nonumber\\[.3cm]
			&=&R_0(x)\;+\;R_1(x)	\nonumber.
		\end{eqnarray}
The first finite sum has the upper bound
		\begin{align}\label{eqP1199.820d}
			R_0(x)&= x\sum_{\substack{1\leq d_1< x\\1\leq d_2< x+1\\\gcd(d_1, d_2)=1}}\frac{\mu(d_1)\mu(d_2)}{d_1d_2} \\[.3cm]
			&=x\sum_{\substack{1\leq d_1< x\\1\leq d_2< x+1}}\frac{\mu(d_1d_2)}{d_1d_2}\nonumber\\[.3cm]
			&= O\bigg(xe^{-c_1\sqrt{\log x}}\bigg)\nonumber,	
		\end{align}
where $d_1< x$ and $d_2< x+1$ are relatively prime and independent variables, this follows from  \hyperlink{thm2222.500}{Theorem} \ref{thm2222.500}. The upper bound of the second sum is computed in \hyperlink{lemP1199.400}{Lemma} \ref{lemP1199.400}. Summing these estimates yields
		\begin{eqnarray}\label{eqP1199.820e}
			\sum_{n\leq x}\mu(n)\mu(n+1)
			&=&  R_0(x)\;+\;R_1(x)\\
			&=&	O\bigg(xe^{-c_1\sqrt{\log x}}\bigg)\nonumber
		\end{eqnarray}
as claimed.
\end{proof}
	
\begin{lem} \label{lemP1199.400}\hypertarget{lemP1199.400} If $x$ is a large number, then
		\begin{equation}\label{eqP1199.400a}
			R_1(x)=	\sum_{\substack{1\leq d_1< x\\1\leq d_2< x+1\\\gcd(d_1, d_2)=1}}\mu(d_1)\mu(d_2)\left(\sum_{\substack{1\leq n\leq x\\d_1\mid n,\; d_2\mid n+1}}1-\frac{x}{d_1d_2}  \right) 
			=  O\bigg(xe^{-c_2\sqrt{\log x}}\bigg)  \nonumber,	
		\end{equation}
		where $c_2>0$ is a constant.
	\end{lem}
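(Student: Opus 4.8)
The plan is to collapse the two divisibility conditions into a single congruence and then apply Lemma~\ref{lemA2002.400W}. Because $\gcd(d_1,d_2)=1$, the Chinese Remainder Theorem makes the joint conditions $d_1\mid n$ and $d_2\mid n+1$ equivalent to one congruence $n\equiv a\pmod{d_1d_2}$ for a unique residue $a=a(d_1,d_2)$. Writing $N(d_1,d_2)$ for the inner count $\sum_{n\le x,\,d_1\mid n,\,d_2\mid n+1}1$, the bracket in $R_1(x)$ is thus the discrepancy of an arithmetic progression to modulus $q=d_1d_2$; since $x/q$ differs from $q^{-1}\sum_{n\le x}1=[x]/q$ by only $O(1/q)$, Lemma~\ref{lemA2002.400W} bounds it by $O\big((x/q)e^{-c\sqrt{\log x}}\big)$, uniformly in $a$, whenever $q\le x$.

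First I would dispose of the principal range $d_1d_2\le x$, where Lemma~\ref{lemA2002.400W} is available. Inserting the bound above and extracting the factor $e^{-c\sqrt{\log x}}$, the residual weighted sum is at most $\big(\sum_{d\le x}\mu^2(d)/d\big)^2\ll(\log x)^2$, so this range contributes $O\big(x(\log x)^2e^{-c\sqrt{\log x}}\big)$. As $(\log x)^2=e^{2\log\log x}$ is negligible against $e^{(c-c_2)\sqrt{\log x}}$, this is $O\big(xe^{-c_2\sqrt{\log x}}\big)$ for any fixed $0<c_2<c$, which already has the required shape.

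The hard part will be the complementary range $d_1d_2>x$, where Lemma~\ref{lemA2002.400W} no longer applies: the progression then contains at most one integer $n\le x$, the discrepancy is only $O(1)$, and a term-by-term estimate summed over the $\asymp x^2$ admissible pairs is far too large. To break this I would use the elementary identity that $\sum_{d\mid m}\mu(d)$ equals $1$ for $m=1$ and $0$ for $m\ge2$. Consider the full weighted count $S=\sum_{d_1,d_2}\mu(d_1)\mu(d_2)N(d_1,d_2)$ over the entire range; inverting the order of summation over $n$ (the coprimality $\gcd(d_1,d_2)=1$ being automatic since $\gcd(n,n+1)=1$), its summand factors as $\big(\sum_{d_1\mid n}\mu(d_1)\big)\big(\sum_{d_2\mid n+1}\mu(d_2)\big)$, which vanishes for every $1\le n<x$ and leaves only the boundary term $n=x$, where $d_1=x$ and $d_2=x+1$ fall outside the ranges; hence $S=\mu(x)\mu(x+1)=O(1)$. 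Since $R_1(x)=S-R_0(x)$, this identity in fact settles every modulus at once, giving $R_1(x)=-R_0(x)+O(1)$, and the bound $R_0(x)=O\big(xe^{-c_1\sqrt{\log x}}\big)$ furnished by Theorem~\ref{thm2222.500} in the proof of Theorem~\ref{thm5757MN.121} then yields the claim.
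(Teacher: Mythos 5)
Your proof is correct and takes a genuinely different route from the paper's for the step that actually matters. The paper's proof \eqref{eqP1199.400b} begins ``let $q=d_1d_2\le x$'' and then applies Lemma \ref{lemA2002.400W} to every pair $(d_1,d_2)$; but the range of summation contains pairs with $d_1d_2$ as large as roughly $x^2$, and Lemma \ref{lemA2002.400W} is stated only for moduli $q\le x$ (for $q>x$ its bound $O\big((x/q)e^{-c\sqrt{\log x}}\big)$ drops below the trivial size $O(1)$ of the discrepancy and is false in general). This unjustified extension is exactly the obstruction you isolate. Your treatment of the range $d_1d_2\le x$ coincides with the paper's argument; the novelty is the complementary range, which you dispose of---in fact you dispose of every modulus at once---by the exact evaluation
\[
S=\sum_{\substack{1\le d_1<x\\ 1\le d_2<x+1}}\mu(d_1)\mu(d_2)\sum_{\substack{1\le n\le x\\ d_1\mid n,\; d_2\mid n+1}}1=\mu(x)\mu(x+1)=O(1),
\]
obtained by interchanging the order of summation and using $\sum_{d\mid m}\mu(d)=0$ for $m\ge 2$; dropping the coprimality condition is legitimate because the inner count vanishes unless $\gcd(d_1,d_2)=1$. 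Together with the paper's own bound \eqref{eqP1199.820d} for $R_0(x)$, this gives $R_1(x)=S-R_0(x)=-R_0(x)+O(1)=O\big(xe^{-c_1\sqrt{\log x}}\big)$, which is the lemma (indeed a stronger, essentially exact form of it). Your argument is elementary, avoids the large sieve input entirely for this step, and repairs a real gap in the paper's proof.

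One consequence deserves emphasis. Inserting your identity into the paper's decomposition \eqref{eqP1199.820c}, which asserts $\sum_{n\le x}\mu(n)\mu(n+1)=S=R_0(x)+R_1(x)$, would give $\sum_{n\le x}\mu(n)\mu(n+1)=O(1)$, which is not believable and is far stronger than Theorem \ref{thm5757MN.121}. The explanation is that the paper's identity \eqref{eqP1199.820a} is itself false: the evaluation \eqref{eqP1199.820b} of the inner exponential sums as $-1$ fails when $n/d_1=1$ or $(n+1)/d_2=1$, where those sums are empty and equal $0$, and these boundary cases are precisely what your interchange of summation tracks correctly. For example, at $x=7$ one has $\sum_{n\le 7}\mu(n)\mu(n+1)=-2$ while $S=\mu(7)\mu(8)=0$. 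None of this affects Lemma \ref{lemP1199.400} as stated, nor your proof of it; but your computation shows that the lemma cannot carry the weight assigned to it in the proof of the main theorem.
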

	\begin{proof}[\textbf{Proof}] Let $q=d_1d_2\leq x$. Taking absolute value and plugging in the asymptotic number of integers in the arithmetic progression, see \hyperlink{lemA2002.400W}{Lemma} \ref{lemA2002.400W}, into the inner sum yield the second finite sum has the upper bound,

		\begin{eqnarray}\label{eqP1199.400b}
			|R_1(x)|&\leq &	\left |	\sum_{\substack{1\leq d_1< x\\1\leq d_2< x+1\\\gcd(d_1, d_2)=1}}\mu(d_1)\mu(d_2)\left(\sum_{\substack{1\leq n\leq x\\d_1\mid n,\; d_2\mid n+1}}1-\frac{x}{d_1d_2}  \right)\right|\\ [.3cm]
			&\leq&\sum_{\substack{1\leq d_1< x\\1\leq d_2< x+1}}\left|\sum_{\substack{1\leq n\leq x\\d_1\mid n,\; d_2\mid n+1}}1-\frac{x}{d_1d_2}  \right|	\nonumber\\[.3cm]
			&\ll&\sum_{\substack{1\leq d_1< x\\1\leq d_2< x+1}}\frac{ x}{d_1d_2}e^{-c\sqrt{\log x}}	\nonumber\\[.3cm]
			&\ll&xe^{-c\sqrt{\log x}}\sum_{\substack{1\leq d_1< x\\1\leq d_2< x+1}}\frac{1}{d_1d_2}	\nonumber\\[.3cm]
			&=&  O\bigg(xe^{-c_2\sqrt{\log x}}\bigg)  \nonumber,	
		\end{eqnarray}
		where $c, c_2>0$ are constants, and the asymptotic estimate
		\begin{equation}\label{eqP1199.400c}
			\sum_{1\leq n\leq x}\frac{1}	{n}=O\bigg(e^{\log \log x}\bigg), 
		\end{equation}
		is used on the last line of expression \eqref{eqP1199.400b} to simplify the upper bound.
	\end{proof}




\begin{thebibliography}{998}
		
		\bibitem{AT1976} Apostol, Tom M. \textit{\color{red}Introduction to analytic number theory}. Undergraduate Texts in Mathematics. Springer-Verlag, New York-Heidelberg, 1976.
		
		
		
		\bibitem{CS1965} Chowla, S. \textit{\color{red}The Riemann hypothesis and Hilbert's tenth problem}. Mathematics and Its Applications, Vol. 4. Gordon and Breach Science Publishers, New York-London-Paris, 1965.
		
		
		
		\bibitem{DH2000}Davenport, H. \textit{\color{red}Multiplicative number theory.} volume 74 of Graduate Texts in Mathematics. Springer-Verlag, New York, third edition, 2000.
		
		\bibitem{DL2012} De Koninck, Jean-Marie; Luca, Florian. \textit{\color{red}Analytic number theory. Exploring the anatomy of integers}. Graduate Studies in Mathematics, 134. American Mathematical Society, Providence, RI, 2012.
		
		
		
		\bibitem{EP1994} Elliott, P. D. T. A. \textit{\color{red}On the correlation of multiplicative and the sum of additive arithmetic functions.}
		Mem. Amer. Math. Soc., 112(538), 1994.
		
		
		\bibitem{GP1967} Gallagher, P. X. \textit{\color{red}The large sieve}. Mathematika 14 (1967), 14-20. 	
		
		
		
		
		
		\bibitem{HH2022} Helfgott, Harald Andres. \textit{\color{red}Expansion, divisibility and parity: an explanation}. http://arxiv.org/abs/2201.00799. 
		
		\bibitem{HW2008} Hardy, G. H.; Wright, E. M. \textit{\color{red}An introduction to the theory of numbers}. Sixth edition. Oxford University Press, Oxford, 2008.
		
		\bibitem{KT2023} Klurman, O.; Mangerel, A. P.; Teravainen, J. \textit{\color{red}On Elliott's conjecture and applications.} http://arxiv.org/abs/2304.05344.
		
		
		
		
		
		\bibitem{MR2015} Matomaki, Kaisa, Radziwill, M., Tao, T. \textit{\color{red}An averaged form of Chowla's conjecture}. http://arxiv.org/abs/1503.05121. 
		
		
		\bibitem{MV2007} Montgomery, Hugh L.; Vaughan, Robert C. \textit{\color{red}Multiplicative number theory. I. Classical theory}. Cambridge University Press, Cambridge, 2007.
		
		
		
		
		
		\bibitem{RO2018} Ramare, Olivier. \textit{\color{red}Chowla's Conjecture: From the Liouville Function to the Moebius Function}. Part of the Lecture Notes in Mathematics book series (LNM,volume 2213), 16 June 2018.	
		
		
		
		
		
		
		\bibitem{SP2010} Sarnak, P. \textit{\color{red}Three Lectures on the Mobius Function Randomness and Dynamics.} Preprint 2010. https://www.math.ias.edu/files/wam/2011/PSMobius.pdf.
		
		
		
		
		
		
		
		
	\end{thebibliography}
\end{document}